\undefined \DeclareGraphicsRule{*}{eps}{*}{} \else
\newtheorem{theorem}{Theorem}[section]
\newtheorem{lemma}[theorem]{Lemma}
\newtheorem{conj}[theorem]{Conjecture}
\newcommand{\F}{{\mathbb F}}
\begin{document}

\title{Erd\H{o}s-Burgess constant of the multiplicative semigroup of the quotient ring of $\mathbb{F}_q[x]$}

\author{
Jun Hao$^{a}$  \ \ \ \ \ \
Haoli Wang$^{b}$\thanks{Corresponding author's Email:
bjpeuwanghaoli@163.com}  \ \ \ \ \ \
Lizhen Zhang$^{a}$
\\
\\
$^{a}${\small Department of Mathematics, Tianjin Polytechnic University, Tianjin, 300387, P. R. China}\\
$^{b}$ {\small College of Computer and Information Engineering}\\
{\small Tianjin Normal University, Tianjin, 300387, P. R. China}\\
}

\date{}
\maketitle

\begin{abstract}  Let $\mathcal{S}$ be a  semigroup endowed with a binary associative operation $*$. An element $e$ of $\mathcal{S}$ is said to be idempotent if $e*e=e$.  The {\sl Erd\H{o}s-Burgess constant}
of the semigroup $\mathcal{S}$ is defined as the smallest $\ell\in \mathbb{N}\cup \{\infty\}$ such that
any sequence $T$ of terms from $S$ and of length $\ell$ contains a nonempty subsequence the product of whose terms, in some order, is idempotent.  Let $q$ be a prime power, and let $\F_q[x]$ be the ring of polynomials over the finite field $\F_q$. Let $R=\F_q[x]\diagup K$ be a quotient ring of $\F_q[x]$ modulo any ideal $K$. We gave a sharp lower bound of the Erd\H{o}s-Burgess constant of the multiplicative semigroup of the ring $R$, in particular, we
determined the Erd\H{o}s-Burgess constant in the case when $K$ is factored into either a power of some prime ideal or a product of some pairwise distinct prime ideals in $\F_q[x]$.
\end{abstract}

\noindent{\sl Key Words}: Erd\H{o}s-Burgess constant; Davenport constant; Multiplicative semigroups; Polynomial rings

\section {Introduction}

Let $\mathcal{S}$ be a nonempty semigroup, endowed with a binary associative operation $*$ on  $\mathcal{S}$, and denote by $E(\mathcal{S})$ the set of idempotents of $\mathcal{S}$, where $x\in \mathcal{S}$ is said to be an idempotent if $x*x=x$.
P. Erd\H{o}s posed a question on idempotent to D.A. Burgess as follows.

{\sl ``If $\mathcal{S}$ is a finite nonempty semigroup of order $n$, does any $\mathcal{S}$-valued sequence $T$ of length $n$ contain a nonempty subsequence the product of whose terms, in some order, is an idempotent?''}

In 1969, Burgess \cite{Burgess69} answered this question in the case when $\mathcal{S}$ is commutative or contains only one idempotent.  This question was completely affirmed by
D.W.H. Gillam, T.E. Hall and N.H. Williams, who proved the following stronger result:

\noindent \textbf{Theorem A.} (\cite{Gillam72}) \ {\sl Let $\mathcal{S}$ be a finite nonempty semigroup. Any $\mathcal{S}$-valued sequence of length  $|\mathcal{S}|-|E(\mathcal{S})|+1$ contains one or more terms whose product (in the order induced from the sequence $T$) is an idempotent; In addition, the bound $|\mathcal{S}|-|E(\mathcal{S})|+1$ is optimal.}

G.Q. Wang \cite{wangStruucture} generalized the result in the context of arbitrary semigroups (including both finite and infinite semigroups).

\noindent \textbf{Theorem B.} (\cite{wangStruucture}, Theorem 1.1) \ {\sl Let $\mathcal{S}$ be a nonempty semigroup such that $|\mathcal{S}\setminus E(\mathcal{S})|$ is finite. Any sequence $T$ of terms from $\mathcal{S}$ of length $|T|\geq|\mathcal{S}\setminus E(\mathcal{S})|+1$ contains one or more terms whose product (in the order induced from the sequence $T$) is an idempotent. }

Moreover, Wang \cite{wangStruucture} characterized the structure of extremal sequences of length $|\mathcal{S}\setminus E(\mathcal{S})|$ and remarked that although the bound $|\mathcal{S}\setminus E(\mathcal{S})|+1$ is
optimal for general semigroups $\mathcal{S}$, the better bound can be obtained for specific classes of semigroups. Hence, Wang proposed two combinatorial additive constants associated with idempotents.

\noindent \textbf{Definition C.} (\cite{wangStruucture}, Definition 4.1) \ {\sl Let $\mathcal{S}$ be a nonempty semigroup and $T$  a sequence of terms from $\mathcal{S}$. We say that $T$ is an {\bf idempotent-product sequence} if its terms can be ordered so that their product is an idempotent
element of $\mathcal{S}$. We call $T$ (weakly) {\bf idempotent-product free} if $T$ contains no nonempty idempotent-product subsequence, and we call $T$ {\bf strongly idempotent-product free} if $T$ contains no nonempty subsequence the product whose terms, in the order induced from the sequence $T$, is an idempotent.
We define $\textsc{I}(\mathcal{S})$, which is called the {\bf Erd\H{o}s-Burgess constant} of the semigroup $\mathcal{S}$, to be the least $\ell\in\mathbb{N}\cup \{\infty\}$ such that every sequence $T$ of terms from $\mathcal{S}$ of length at least $\ell$ is not (weakly) idempotent-product free,
and we define $\textsc{SI}(\mathcal{S})$, which is called the {\bf strong Erd\H{o}s-Burgess constant} of the semigroup $\mathcal{S}$, to be the least $\ell\in\mathbb{N}\cup \{\infty\}$ such that every sequence $T$ of terms from $\mathcal{S}$ of length at least $\ell$ is not strongly idempotent-product free. Formally, one can also define $$\textsc{I}(\mathcal{S})={\rm sup}\ \{|T|+1: T \mbox{ takes all idempotent-product free sequences of terms from } \mathcal{S}\}$$ and $$\textsc{SI}(\mathcal{S})={\rm sup}\ \{|T|+1: T \mbox{ takes every strongly idempotent-product free sequences of terms from } \mathcal{S}\}.$$}

Very recently, Wang \cite{wangErdos-burgess} made a comprehensive study of the Erd\H{o}s-Burgess constant for the direct product of arbitrarily many of cyclic semigroups.
As pointed out in \cite{wangStruucture}, the Erd\H{o}s-Burgess constant reduces to be the famous Davenport constant in the case when the underlying semigroup happens to be a finite abelian group. So we need to introduce the definition of Davenport constant below.

Let $G$ be an additive finite abelian group. A sequence $T$ of
terms from $G$ is called a {\sl zero-sum sequence} if the sum of
all terms of $T$ equals to zero, the identity element of $G$. We call $T$ a {\sl zero-sum free} sequence if $T$ contains no nonempty  zero-sum subsequence.
The Davenport constant ${\rm D}(G)$ of
$G$ is defined to be the smallest positive integer $\ell$  such that,
every sequence $T$ of terms from $G$ and of length at least $\ell$ is not zero-sum free.

In 2008, Wang and Gao \cite{wanggao} extended the definition of the Davenport constant to commutative semigroups as follows.

\noindent \textbf{Definition D.} \ {\sl Let $\mathcal{S}$ be a finite commutative semigroup. Let $T$ be a sequence of terms from the semigroup $\mathcal{S}$. We call $T$  reducible if $T$ contains a proper subsequence $T'$ ($T'\neq T$) such that the sum of all terms of $T'$ equals the sum of all terms of $T$. Define the Davenport constant of the semigroup $\mathcal{S}$, denoted ${\rm D}(\mathcal{S})$, to be the smallest $\ell\in \mathbb{N}\cup\{\infty\}$ such that every sequence $T$ of length at least $\ell$ of terms from $\mathcal{S}$ is reducible.}

Several related additive results on Davenport constant for semigroups were obtained  (see \cite{wangDavenportII}, \cite{wangAddtiveirreducible},  \cite{gaowangII}, \cite{wang-zhang-qu}). For any commutative ring $R$, we denote $\mathcal{S}_R$ to be the multiplicative semigroup of the ring $R$ and ${\rm U}(\mathcal{S}_R)$ to be the group of units of the semigroup $\mathcal{S}_R$.
With respect to the Davenport constant for the multiplicative semigroup associated with polynomial rings $\F_q[x]$, Wang obtained the following result.

\noindent \textbf{Theorem E}. (\cite{wangDavenportII}) \ {\sl Let $q>2$ be a prime power, and let $\F_q[x]$ be the ring of polynomials over the finite field $\F_q$.
Let $R$ be a quotient ring of $\F_q[x]$ with $0\neq R\neq \F_q[x]$. Then ${\rm D}(\mathcal{S}_R)={\rm D}({\rm U}(\mathcal{S}_R)).$}

G.Q. Wang \cite{wangDavenportII} proposed to determine ${\rm D}(\mathcal{S}_R)-{\rm D}({\rm U}(\mathcal{S}_R))$ for the remaining case that $R$ is a quotient ring of $\mathbb{F}_2[x]$.

L.Z. Zhang, H.L. Wang and Y.K. Qu partially answered  Wang's question and obtained the following.

\noindent \textbf{Theorem F}. (\cite{wang-zhang-qu}) \ {\sl  Let $\F_2[x]$ be the ring of polynomials over the finite field $\F_2$, and let $R={\F_2[x]}\diagup{(f)}$ be a quotient ring of $\F_2[x]$, where $f\in\F_2[x]$ and $0\neq R\neq \F_2[x]$.
Then  $${\rm D}({\rm U}(\mathcal{S}_R))\leq {\rm D}(\mathcal{S}_R)\leq {\rm D}({\rm U}(\mathcal{S}_R))+\delta_f,$$
where
\begin{displaymath}
\delta_f=\left\{ \begin{array}{ll}
0 & \textrm{if $\gcd(x*(x+1_{\mathbb{F}_2}),\ f)=1_{\F_{2}}$;}\\
1 & \textrm{if $\gcd(x*(x+1_{\mathbb{F}_2}),\ f)\in \{x, \ x+1_{\mathbb{F}_2}\}$;}\\
2 & \textrm{if $\gcd(x*(x+1_{\mathbb{F}_2}),f)=x*(x+1_{\mathbb{F}_2})$.}\\
\end{array} \right.
\end{displaymath}}

Motivated by the above additive research on semigroups, in this manuscript we make a study of the Erd\H{o}s-Burgess constant on the multiplicative semigroups of the quotient rings of the polynomial rings $\F_q[x]$ and obtain the following result.

\begin{theorem}\label{Theorem main}
\  Let $q$ be a prime power, and let $\F_q[x]$ be the ring of polynomials over the finite field $\F_q$. Let $R=\F_q[x]\diagup K$ be a quotient ring of $\F_q[x]$ modulo any ideal $K$. Then $${\rm I}(\mathcal{S}_R)\geq {\rm D}({\rm U}(\mathcal{S}_R))+\Omega(K)-\omega(K),$$ where $\Omega(K)$ is the number of the prime ideals (repetitions are counted) and $\omega(K)$ the number of distinct prime ideals
in the factorization when $K$
is factored into a product of prime ideals. Moreover, the equality holds for the case when
$K$ is factored into either a power of some prime ideal or a product of some pairwise distinct prime ideals in $\F_q[x]$.
\end{theorem}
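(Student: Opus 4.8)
The plan is to reduce to the local factors via the Chinese Remainder Theorem and then to track idempotents through $p_i$-adic valuations. Since $\F_q[x]$ is a principal ideal domain, I would write $K=(f)$ with $f=p_1^{e_1}\cdots p_s^{e_s}$ its factorization into pairwise non-associate irreducibles, so that $R\cong \prod_{i=1}^{s}R_i$ with $R_i=\F_q[x]\diagup(p_i^{e_i})$ a local ring whose only idempotents are $0_i$ and $1_i$. Consequently the idempotents of $\mathcal{S}_R$ are exactly the tuples $(\eta_1,\dots,\eta_s)$ with each $\eta_i\in\{0_i,1_i\}$, and $\Omega(K)-\omega(K)=\sum_{i=1}^{s}(e_i-1)$. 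For $a\in R_i$ let $v_i(a)$ denote its $p_i$-adic valuation capped at $e_i$; then $a$ is a unit iff $v_i(a)=0$, $a=0_i$ iff $v_i(a)=e_i$, and valuations add (capped at $e_i$) under multiplication. The point of these valuations is that a product lands in the component idempotent $0_i$ precisely when the valuations of its factors sum to at least $e_i$.

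For the lower bound I would exhibit a long idempotent-product-free sequence. Choose a product-one-free sequence $W$ of units of maximal length $m={\rm D}({\rm U}(\mathcal{S}_R))-1$ in ${\rm U}(\mathcal{S}_R)$. For each $i$ with $e_i\geq 2$ pick (using CRT) an element $\gamma_i\in R$ that equals $p_i$ in the $i$-th coordinate and is a unit in every other coordinate, and adjoin $e_i-1$ copies of $\gamma_i$. Let $T$ be the resulting sequence, of length $m+\sum_{i}(e_i-1)$. The verification is the crux: in any nonempty subsequence the only factors with positive valuation in coordinate $i$ are the chosen copies of $\gamma_i$, of which there are at most $e_i-1$, so the product has valuation at most $e_i-1<e_i$ in every coordinate and hence is never $0_i$ in any component. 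Thus the product can be idempotent only if it equals $1_R$, which forces no $\gamma_i$ to be used and the product of a nonempty subsequence of $W$ to be $1_R$, contradicting the product-one-freeness of $W$. This yields ${\rm I}(\mathcal{S}_R)\geq {\rm D}({\rm U}(\mathcal{S}_R))+\Omega(K)-\omega(K)$.

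For the matching upper bound I would treat the two equality cases separately. When $K=\mathfrak{p}^{e}$ is a prime power, $R$ is local with idempotents $0$ and $1$ only. Given an idempotent-product-free $T$ (no term is $0$), the units among its terms form a product-one-free sequence, hence number at most ${\rm D}({\rm U}(\mathcal{S}_R))-1$, while its non-units number at most $e-1$, since any $e$ non-units have total valuation at least $e$ and so multiply to the idempotent $0$; adding these gives $|T|\leq {\rm D}({\rm U}(\mathcal{S}_R))+e-2$. When $K=\mathfrak{p}_1\cdots \mathfrak{p}_s$ is squarefree, $R\cong\prod_{i=1}^{s}F_i$ is a product of fields, and every $a\in R$ factors as $a=u_a\,\varepsilon_A$ with $u_a\in {\rm U}(\mathcal{S}_R)$ the element agreeing with $a$ on $A=\{i:a_i\neq 0\}$ and equal to $1$ off $A$, and $\varepsilon_A$ the idempotent supported on $A$. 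For any subsequence $S$ one then has $\prod_{j\in S}t_j=\bigl(\prod_{j\in S}u_j\bigr)\varepsilon_{\bigcap_j A_j}$, so if the unit parts $u_j$ had a nonempty product-one subproduct, the corresponding subproduct of $T$ would be the idempotent $\varepsilon_{\bigcap_j A_j}$. Hence the $u_j$ form a product-one-free sequence and $|T|\leq {\rm D}({\rm U}(\mathcal{S}_R))-1$. In both cases the bound matches the lower bound, giving equality.

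The main obstacle is the verification of the extremal construction in the lower bound, where one must rule out every nonempty subsequence simultaneously; the valuation bookkeeping that confines each coordinate to valuation at most $e_i-1$ is what makes the argument go through. The two upper bounds each hinge on a clean structural decomposition — the additivity of valuations in the local case and the unit-times-idempotent factorization in the product-of-fields case — and it is precisely the breakdown of such a decomposition in the mixed case, where a single component may carry an intermediate valuation $1\leq v_i<e_i$, that explains why equality is asserted only for these two families of ideals.
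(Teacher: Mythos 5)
Your proposal is correct and follows essentially the same route as the paper: your CRT/valuation bookkeeping is just the paper's Lemma~\ref{Lemma idempotent form} (idempotents are exactly the elements congruent to $0_{\mathbb{F}_q}$ or $1_{\mathbb{F}_q}$ modulo each $p_i^{n_i}$) in different notation, your extremal sequence $W\cdot\coprod_i\gamma_i^{e_i-1}$ is exactly the paper's $V\cdot\coprod_{i=1}^r b_i^{n_i-1}$, and your two upper bounds are contrapositive restatements of the paper's pigeonhole count in Case~1 and of its unit-replacement $a\mapsto\widetilde{a}$ (your $u_a$, with your factorization $a=u_a\varepsilon_A$ making the same verification explicit) in Case~2. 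The only omission is the trivial boundary cases $K=\F_q[x]$ and $K=(0)$, which the paper dispatches in a one-line remark before assuming $K$ is a nonzero proper ideal, as you implicitly do.
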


\section{Notation}

Let $\mathcal{S}$ be a finite commutative semigroup.
The operation on $\mathcal{S}$ will be denoted by $+$ instead of $*$.
The identity element of $\mathcal{S}$, denoted $0_{\mathcal{S}}$ (if exists), is the unique element $e$ of
$\mathcal{S}$ such that $e+a=a$ for every $a\in \mathcal{S}$. If $\mathcal{S}$ has an identity element $0_{\mathcal{S}}$, let
$${\rm U}(\mathcal{S})=\{a\in \mathcal{S}: a+a'=0_{\mathcal{S}} \mbox{ for some }a'\in \mathcal{S}\}$$ be the group of units
of $\mathcal{S}$.
The sequence $T$ of terms from the semigroups $\mathcal{S}$ is denoted by $$T=a_1a_2\cdot\ldots\cdot a_{\ell}=\coprod\limits_{a\in \mathcal{S}} a^{\ {\rm v}_a(T)},$$ where ${\rm v}_a(T)$ denotes the multiplicity of the element $a$ occurring in the sequence $T$. By $\cdot$ we denote the operation to join sequences.
Let $T_1,T_2$ be two sequences of terms from the semigroups $\mathcal{S}$. We call $T_2$
a subsequence of $T_1$ if $${\rm v}_a(T_2)\leq {\rm v}_a(T_1)$$ for every element $a\in \mathcal{S}$, denoted by $$T_2\mid T_1.$$ In particular, if $T_2\neq T_1$, we call $T_2$ a {\sl proper} subsequence of $T_1$, and write $$T_3=T_1  T_2^{-1}$$ to mean the unique subsequence of $T_1$ with $T_2\cdot T_3=T_1$.  Let $$\sigma(T)=a_1+a_2+\cdots+a_{\ell}$$ be the sum of all terms in the sequence $T$.

Let $q$ be a prime power, and let $\F_q[x]$ be the ring of polynomials over the finite field $\F_q$. Let $R=\F_q[x]\diagup K$ be the quotient ring of $\F_q[x]$ modulo the ideal $K$, and let $\mathcal{S}_R$ be the multiplicative semigroup of the ring $R$.
Take an arbitrary element $a\in \mathcal{S}_{R}$.
Let $\theta_a\in \mathbb{F}_q[x]$ be the unique polynomial corresponding to the element $a$ with the least degree, thus,
$\overline{\theta_a}=\theta_a+K$ is the corresponding form of $a$ in the quotient ring $R$.

\noindent $\bullet$  In what follows, since we deal with only the multiplicative semigroup $\mathcal{S}_{R}$ which happens to be commutative,  we shall use the terminology {\sl idempotent-sum} and {\sl idempotent-sum free} in place of {\sl idempotent-product} and {\sl idempotent-product free}, respectively.

\section{Proof of Theorem \ref{Theorem main}}

\begin{lemma} \label{Lemma idempotent form} Let $q$ be a prime power, and let $\F_q[x]$ be the ring of polynomials over the finite field $\F_q$.
Let $f$ be a polynomial in $\F_q[x]$ and let
$f=p_1^{n_1}p_2^{n_2}\cdots p_r^{n_r}$,
where $r\geq 1$, $n_1,n_2,\ldots,n_r\geq 1$, and
$p_1, p_2, \ldots,p_r$ are pairwise non-associate  irreducible polynomials in $\mathbb{F}_q[x]$. Let $R=\mathbb{F}_q[x]\diagup (f)$ be the quotient ring of $\mathbb{F}_q[x]$ modulo the ideal $(f)$.
Let $a$ be an element in the semigroup of $\mathcal{S}_R$.
Then $a$ is idempotent if and only if $\theta_a\equiv 0_{\mathbb{F}_q}\pmod {p_i^{n_i}}$ or $\theta_a\equiv 1_{\mathbb{F}_q}\pmod {p_i^{n_i}}$ for every $i\in [1,r]$.
\end{lemma}

\begin{proof} \ Suppose that $a$ is idempotent. Then $\theta_a\theta_a\equiv \theta_a\pmod f$, which implies that $\theta_a(\theta_a-1_{\mathbb{F}_q})\equiv 0_{\mathbb{F}_q} \pmod {p_i^{n_i}}$ for all $i \in [1,r]$. Since $\gcd(\theta_a, \theta_a-1_{\mathbb{F}_q})=1_{\mathbb{F}_q}$,  it follows that for every $i\in [1,r]$, $p_i^{n_i}$ divides $\theta_a$ or $p_i^{n_i}$ divides $\theta_a-1_{\mathbb{F}_q}$, that is, $\theta_a\equiv 0_{\mathbb{F}_q}\pmod {p_i^{n_i}}$ or $\theta_a\equiv 1_{\mathbb{F}_q}\pmod {p_i^{n_i}}$. Then the necessity holds. The sufficiency holds similarly.
\end{proof}

We remark that in Theorem \ref{Theorem main}, if $K=\F_q[x]$, then $R$ is a trivial zero ring and ${\rm I}(\mathcal{S}_R)={\rm D}(\mathcal{S}_R)=1$ and $\Omega(K)=\omega(K)=0$,
and if $K$ is the zero ideal then $R=\F_q[x]$ and ${\rm I}(\mathcal{S}_R)$ is infinite since any sequence $T$ of any length such that $\theta_a$ is a nonconstant polynomial for all terms $a$ of $T$ is an idempotent-sum free sequence, and thus, the conclusion holds trivially for both cases.    Hence, we shall only consider the case that $K$ is nonzero proper ideal of $\F_q[x]$ in what follows.

\noindent {\sl Proof of Theorem  \ref{Theorem main}.} \ Note that $\F_q[x]$ is a principal ideal domain. Say
\begin{equation}\label{equation K=(f)}
K=(f)
\end{equation} is the principal ideal generated by a polynomial $f\in \F_q[x]$, where
\begin{equation}\label{equation factorization of f(x)}
f=p_1^{n_{1}}p_2^{n_{2}}\cdots p_r^{n_r},
\end{equation}
 where $p_1,p_2, \ldots,p_r$ are pairwise non-associate irreducible polynomials of $\mathbb{F}_q[x]$ and $n_i\geq 1$ for all $i\in [1,r]$, equivalently, $$K=P_1^{n_1}P_2^{n_2}\cdots P_r^{n_r}$$ is the factorization of the ideal $K$ into the product of the powers of distinct prime ideals $P_1=(p_1),P_2=(p_2),\ldots,P_r=(p_r).$  Observe that
 \begin{equation}\label{equation bigomega(K)}
 \Omega(K)=\sum\limits_{i=1}^r n_i
 \end{equation}
 and \begin{equation}\label{equation smallomega(K)}
 \omega(K)=r.
 \end{equation}

 Take a zero-sum free sequence $V$ of terms from the group ${\rm U}(\mathcal{S}_R)$ of length ${\rm D}({\rm U}(\mathcal{S}_R))-1$. Take $b_i\in \mathcal{S}_R$ such that
$\theta_{b_i}=p_i$ for each $i\in [1,r]$.
Now we show that the sequence $V\cdot \coprod\limits_{i=1}^r b_i^{n_i-1}$ is an idempotent-sum free sequence in $\mathcal{S}_R$. Suppose to the contrary that $V\cdot \coprod\limits_{i=1}^r b_i^{n_i-1}$ contains a {\bf nonempty} subsequence $W$, say $W=V'\cdot \coprod\limits_{i=1}^r b_i^{\beta_i} $,  such that $\sigma(W)$ is idempotent, where $V'$ is a subsequence of $V$ and $$\beta_i\in [0,n_i-1]\mbox{ for all } i\in [1,r].$$ It follows that
\begin{equation}\label{equation theta sigma(W)}
\theta_{ \sigma(W)}=\theta_{ \sigma(V')}\theta_{ \sigma(\coprod\limits_{i=1}^r b_i^{\beta_i})}=\theta_{\sigma(V')}p_1^{\beta_1}\cdots p_r^{\beta_r}.
\end{equation}
If $\sum\limits_{i=1}^r\beta_i=0$, then $W=V'$ is a {\sl nonempty} subsequence of $V$. Since $V$ is zero-sum free in the group of ${\rm U}(\mathcal{S}_R)$, we derive that $\sigma(W)$ is a nonidentity element of the group ${\rm U}(\mathcal{S}_R)$, and thus, $\sigma(W)$ is not idempotent, a contradiction. Otherwise, $\beta_j>0$ for some $j\in [1,r]$, say
\begin{equation}\label{equation beta1 in [1,n1-1]}
\beta_1\in [1,n_1-1].
\end{equation}
 Since $\gcd(\theta_{\sigma(V')},p_1)=1_{\mathbb{F}_q}$, it follows from \eqref{equation theta sigma(W)} that $\gcd(\theta_{\sigma(W)},p_1^{n_1})=p_1^{\beta_1}$. Combined with \eqref{equation beta1 in [1,n1-1]}, we have that $\theta_{\sigma(W)}\not\equiv 0_{\mathbb{F}_q}\pmod {p_1^{n_1}}$ and
$\theta_{\sigma(W)}\not\equiv 1_{\mathbb{F}_q}\pmod {p_1^{n_1}}$. By
Lemma \ref{Lemma idempotent form}, we conclude that $\sigma(W)$ is not idempotent, a contradiction.
This proves that the sequence $V\cdot \coprod\limits_{i=1}^r b_i^{n_i-1}$ is idempotent-sum free in $\mathcal{S}_R$. Combined with \eqref{equation bigomega(K)} and \eqref{equation smallomega(K)}, we have that
\begin{equation}\label{equation I(S)geq in case prime power}
{\rm I}(\mathcal{S}_R)\geq |V\cdot \coprod\limits_{i=1}^r b_i^{n_i-1}|+1=(|V|+1)+\sum\limits_{i=1}^r (n_i-1)=
{\rm D}({\rm U}(\mathcal{S}_R))+\Omega(K)-\omega(K).
\end{equation}

Now we assume that $K$ is factored into either a power of some prime ideal or a product of some pairwise distinct prime ideals in $\F_q[x]$, i.e., either $r=1$ or $n_1=\cdots =n_r=1$ in \eqref{equation factorization of f(x)}. It remains to show the equality ${\rm I}(\mathcal{S}_R)=
{\rm D}({\rm U}(\mathcal{S}_R))+\Omega(K)-\omega(K)$ holds. We distinguish two cases.

\noindent \textbf{Case 1.} \ $r=1$ in \eqref{equation factorization of f(x)}, i.e., $f=p_1^{n_1}$.

Take an arbitrary sequence $T$ of length $|T|={\rm D}({\rm U}(\mathcal{S}_R))+n_1-1={\rm D}({\rm U}(\mathcal{S}_R))+\Omega(K)-\omega(K)$.
 Let $T_1=\coprod\limits_{\stackrel{a\mid T}{\theta_a\equiv 0 \pmod {p_1}}} a$ and $T_2=T T_1^{-1}$. Note that all terms of $T_2$ are from ${\rm U}(\mathcal{S}_R)$.
 By the Pigeonhole Principle, we see that either $|T_1|\geq n_1$ or $|T_2|\geq {\rm D}({\rm U}(\mathcal{S}_R))$. It follows that either $\theta_{\sigma(T_1)}\equiv 0_{\mathbb{F}_q}\pmod {p_1^{n_1}}$, or $T_2$ contains a nonempty subsequence $T_2'$ such that $\sigma(T_2')$ is the identity element of the group ${\rm U}(\mathcal{S}_R)$. By Lemma \ref{Lemma idempotent form}, the sequence $T$ is not idempotent-sum free, which implies that ${\rm I}(\mathcal{S}_R)\leq {\rm D}({\rm U}(\mathcal{S}_R))+\Omega(K)-\omega(K)$. Combined with \eqref{equation I(S)geq in case prime power}, we have that $${\rm I}(\mathcal{S}_R)={\rm D}({\rm U}(\mathcal{S}_R))+\Omega(K)-\omega(K).$$

\noindent \textbf{Case 2.} \ $n_1=\cdots =n_r=1$ in \eqref{equation factorization of f(x)}, i.e., $f=p_1p_2\cdots p_r$.

Then
\begin{equation}\label{equation bigomega=smallomega}
\Omega(K)=\omega(K)=r.
\end{equation}
Take an arbitrary sequence $T$ of length $|T|={\rm D}({\rm U}(\mathcal{S}_R))$.
For any term $a$ of $T$, let $\widetilde{a}\in \mathcal{S}_R$ be such that for each $i\in [1,r]$,
\begin{equation}\label{equaiton tilde a}
\theta_{\widetilde{a}}\equiv\left\{ \begin{array}{ll}
1_{\mathbb{F}_q} \pmod {p_i} & \textrm{if $\theta_{a}\equiv 0_{\mathbb{F}_q}\pmod {p_i}$;}\\
\theta_{a} \pmod {p_i} & \textrm{otherwise.}\\
\end{array} \right.
\end{equation}
Note that $$\widetilde{a}\in {\rm U}(\mathcal{S}_R).$$ Let $\widetilde{T}=\coprod\limits_{a\mid T}\tilde{a}$. Then $\widetilde{T}$ is a sequence of terms from the group ${\rm U}(\mathcal{S}_R)$ with length $|\widetilde{T}|=|T|= {\rm D}({\rm U}(\mathcal{S}_R))$. It follows that there exists a nonempty subsequence $W$ of $T$ such that $\sigma(\coprod\limits_{a\mid W}\tilde{a})$ is the identity element of the group ${\rm U}(\mathcal{S}_R)$, i.e., $\theta_{\sigma(\coprod\limits_{a\mid W}\tilde{a})}\equiv 1_{\mathbb{F}_q}\pmod {p_i}$ for each $i\in [1,r]$. By \eqref{equaiton tilde a}, we derive that $\theta_{\sigma(W)}\equiv 0_{\mathbb{F}_q}\pmod {p_i}$ or $\theta_{\sigma(W)}\equiv 1_{\mathbb{F}_q}\pmod {p_i}$ for each $i\in [1,r]$.
By Lemma \ref{Lemma idempotent form}, we conclude that $\sigma(W)$ is idempotent. Combined with \eqref{equation bigomega=smallomega}, we have that ${\rm I}( \mathcal{S}_R)\leq {\rm D}({\rm U}(\mathcal{S}_R))={\rm D}({\rm U}(\mathcal{S}_R))+\Omega(K)-\omega(K)$. It follows from \eqref{equation I(S)geq in case prime power} that ${\rm I}(\mathcal{S}_R)={\rm D}({\rm U}(\mathcal{S}_R))+\Omega(K)-\omega(K)$, completing the proof. \qed

We close this paper with the following conjecture.

\begin{conj}
\  Let $q>2$ be a prime power, and let $\F_q[x]$ be the ring of polynomials over the finite field $\F_q$. Let $R=\F_q[x]\diagup K$ be a quotient ring of $\F_q[x]$ modulo any nonzero proper ideal $K$. Then
${\rm I}(\mathcal{S}_R)={\rm D}({\rm U}(\mathcal{S}_R))+\Omega(K)-\omega(K).$
\end{conj}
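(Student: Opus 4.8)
The lower bound ${\rm I}(\mathcal{S}_R)\geq {\rm D}({\rm U}(\mathcal{S}_R))+\Omega(K)-\omega(K)$ is already contained in \eqref{equation I(S)geq in case prime power}, which was derived for an arbitrary nonzero proper ideal $K$; hence the whole content of the conjecture is the reverse inequality ${\rm I}(\mathcal{S}_R)\leq {\rm D}({\rm U}(\mathcal{S}_R))+\Omega(K)-\omega(K)$. Writing $K=(f)$ with $f=p_1^{n_1}\cdots p_r^{n_r}$ as in \eqref{equation factorization of f(x)}, the Chinese Remainder Theorem gives a semigroup isomorphism $\mathcal{S}_R\cong \prod_{i=1}^{r}\mathcal{S}_{R_i}$ with $R_i=\F_q[x]/(p_i^{n_i})$ a finite chain ring. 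For a term $a$ let $v_i(a)$ denote the $p_i$-adic valuation of $\theta_a$ (capped at $n_i$). Rephrasing Lemma \ref{Lemma idempotent form} coordinatewise, a nonempty subsequence $W$ is idempotent-sum if and only if, for every $i\in[1,r]$, either $\sum_{a\mid W}v_i(a)\geq n_i$ (coordinate $i$ collapses to $0$) or every term of $W$ is a unit in coordinate $i$ and the product of the corresponding unit parts is $1$ (coordinate $i$ equals $1$). The two extremes settled in Theorem \ref{Theorem main} are exactly those in which this criterion decouples: for $r=1$ only one coordinate is constrained, and in the squarefree case a single non-unit term already collapses its coordinate, so the unitization of \eqref{equaiton tilde a} applies verbatim. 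The genuine difficulty is the mixed case, where a coordinate with $n_i\geq 2$ may be forced to an intermediate value $p_i^{s}\cdot(\text{unit})$ with $0<s<n_i$, which is neither $0$ nor $1$.

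The plan is to induct on the excess $E:=\Omega(K)-\omega(K)=\sum_{i=1}^{r}(n_i-1)$. The base case $E=0$ is the squarefree case of Theorem \ref{Theorem main}. For the inductive step, fix $j$ with $n_j\geq 2$ and set $f^{-}=f/p_j$, $K^{-}=(f^{-})$ and $R^{-}=\F_q[x]/K^{-}$, so that $\Omega(K^{-})-\omega(K^{-})=E-1$. The reduction $\pi\colon \mathcal{S}_R\to \mathcal{S}_{R^{-}}$ is a surjective semigroup homomorphism that is the identity on every coordinate $i\neq j$ (since $f$ and $f^{-}$ share the factor $p_i^{n_i}$ there) and whose restriction to unit groups has kernel $P=\{\,1+c\,p_j^{\,n_j-1}:c\in \F_q[x]/(p_j)\,\}\cong \F_{q^{\deg p_j}}^{+}$. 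Given a sequence $T$ over $\mathcal{S}_R$ with $|T|={\rm D}({\rm U}(\mathcal{S}_R))+E$, monotonicity of the Davenport constant under the quotient ${\rm U}(\mathcal{S}_R)\to {\rm U}(\mathcal{S}_{R^{-}})$ gives $|T|\geq {\rm D}({\rm U}(\mathcal{S}_{R^{-}}))+(E-1)={\rm I}(\mathcal{S}_{R^{-}})$ by the induction hypothesis, so $\pi(T)$ contains a nonempty subsequence $\pi(W)$ with $\pi(\sigma(W))=\sigma(\pi(W))$ idempotent in $R^{-}$.

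Lifting $W$ back to $\mathcal{S}_R$, the idempotence of $\pi(\sigma(W))$ together with the fact that $\pi$ is the identity on the coordinates $i\neq j$ forces $\sigma(W)$ to lie in $\{0,1\}$ in every coordinate $i\neq j$, and to satisfy $\sigma(W)\equiv 0$ or $1\pmod{p_j^{\,n_j-1}}$. Thus the only possible defect of $W$ is in coordinate $j$, where $\sigma(W)$ equals either $c\,p_j^{\,n_j-1}$ (valuation exactly $n_j-1$) or $1+c\,p_j^{\,n_j-1}$ with $c\neq 0$; in both cases the defect is governed by the single kernel group $P$. The remaining task is to exploit the one extra term of slack in $|T|$, together with $P$, to append or re-choose terms of $T$ that push coordinate $j$ to $0$ or to $1$ without disturbing the coordinates $i\neq j$ already fixed at $0$ or $1$.

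The main obstacle is precisely this correction step. A term $a^{*}$ repairing coordinate $j$ must simultaneously be $\equiv 1$ in every coordinate where $\sigma(W)$ currently equals $1$, and such a term need not be present in $T$; equivalently, ${\rm D}({\rm U}(\mathcal{S}_R))$ does not exceed ${\rm D}({\rm U}(\mathcal{S}_{R^{-}}))$ by $1$ but by the full contribution of the $p$-group $P$, so the naive length bookkeeping above is not by itself decisive. I expect that overcoming this requires strengthening the induction hypothesis from a statement about lengths to a \emph{structural} statement describing the extremal idempotent-sum free sequences (in the spirit of Wang's characterization in \cite{wangStruucture}), so that $W$ is chosen coherently with the correction rather than corrected after the fact; the hypothesis $q>2$ should enter here, guaranteeing enough non-identity units to effect the correction and ruling out the $\F_2$ anomalies already visible in Theorems E and F. Making this coordinated choice succeed uniformly across all mixed factorizations is the crux that separates the conjecture from the two cases resolved in Theorem \ref{Theorem main}.
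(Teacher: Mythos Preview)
The statement you are addressing is a \emph{conjecture} in the paper; the authors do not prove it and offer no argument beyond the lower bound already established in the proof of Theorem~\ref{Theorem main}. So there is no ``paper's proof'' to compare your proposal against---any progress here would go strictly beyond what the paper contains.

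As to the proposal itself: you have correctly isolated where the difficulty lies, but what you have written is an outline with a self-acknowledged gap, not a proof. The inductive framework (reduce $n_j$ by one, push the sequence through $\pi\colon \mathcal{S}_R\to\mathcal{S}_{R^{-}}$, invoke the induction hypothesis, then repair coordinate $j$) is natural, and your analysis of the defect---that $\sigma(W)$ is already $0$ or $1$ in every coordinate $i\neq j$ and is congruent to $0$ or $1$ modulo $p_j^{\,n_j-1}$---is correct. But the ``correction step'' is exactly the content of the conjecture, and you do not carry it out. Concretely: the slack $|T|-{\rm I}(\mathcal{S}_{R^{-}})$ equals $\bigl({\rm D}({\rm U}(\mathcal{S}_R))-{\rm D}({\rm U}(\mathcal{S}_{R^{-}}))\bigr)+1$, and while this can be large when the kernel $P\cong \F_{q^{\deg p_j}}^{+}$ is large, there is no mechanism in your argument that converts this slack into a term of $T$ which (i) multiplies the $j$-th coordinate into $\{0,1\}$ and (ii) is $\equiv 1$ on every coordinate where $\sigma(W)$ currently equals $1$. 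Nothing forces $T$ to contain any such term. Your suggestion to strengthen the induction hypothesis to a structural description of extremal idempotent-sum free sequences is plausible, and the remark that $q>2$ must be used somewhere is on target (Theorems~E and~F show the squarefree/unit behaviour is genuinely different for $q=2$), but until that strengthened hypothesis is formulated and proved, the argument remains incomplete at its central step.
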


\bigskip

\noindent {\bf Acknowledgements}

\noindent
This work is supported by NSFC (grant no. 11501561, 61303023).

\end{document}